\newtheorem*{theorem}{Theorem}
\newtheorem{lemma}{Lemma}
\newtheorem*{Rodin}{Theorem R}
\newtheorem*{Kara}{Theorem K}
\def\ZT{\ensuremath{\mathbb T}}
\newcommand {\e }[1]{(\ref{#1})}
\begin{document}
\author{Ushangi Goginava and Grigori Karagulyan}
\address{U. Goginava, Department of Mathematics, Faculty of Exact and
Natural Sciences, Ivane Javakhishvili Tbilisi State University,
Chavcha\-vadze str. 1, Tbilisi 0179, Georgia}
\email{zazagoginava@gmail.com}
\address{G. A. Karagulyan, Faculty of Mathematics and Mechanics, Yerevan
State University, Alex Manoogian, 1, 0025, Yerevan, Armenia}
\email{g.karagulyan@ysu.am}
\title[Almost everywhere exponential summability]{ On almost everywhere
exponential summability of rectangular partial sums of double trigonometric
Fourier series}
\date{}
\maketitle

\begin{abstract}
In this paper we study the a.e. exponential strong summability problem for
the rectangular partial sums of double trigonometric Fourier series of the
functions from $L\log L$ .
\end{abstract}

\footnotetext{%
2010 Mathematics Subject Classification 42C10 .
\par
Key words and phrases: Double Fourier series, strong summability,
exponential means.}

\section{Introduction}

We denote the set of all non-negative integers by $\mathbb{N}$. Let $\mathbb{%
T}:=[-\pi ,\pi )=\mathbb{R}/2\pi $ and $\mathbb{R}:=\left( -\infty ,\infty
\right) $. Denote by $L^{1}\left( \mathbb{T}\right) $ the class of all
measurable functions $f$ on $\mathbb{R}$ that are $2\pi $-periodic and
satisfy 
\begin{equation*}
\left\Vert f\right\Vert _{1}:=\int\limits_{\mathbb{T}}\left\vert
f\right\vert <\infty .
\end{equation*}%
The Fourier series of a function $f\in L^{1}\left( \mathbb{T}\right) $ with
respect to the trigonometric system is 
\begin{equation}
\sum_{n=-\infty }^{\infty }c_{n}e^{inx},  \label{fourier}
\end{equation}%
where 
\begin{equation*}
c_{n}:=\frac{1}{2\pi }\int\limits_{\mathbb{T}}f\left( x\right) e^{-inx}dx
\end{equation*}%
are the Fourier coefficients of $f$. Denote by $S_{n}(x,f)$ the partial sums
of the Fourier series of $f$ and let 
\begin{equation*}
\sigma _{n}(x,f)=\frac{1}{n+1}\sum_{k=0}^{n}S_{k}(x,f)
\end{equation*}%
be the $(C,1)$ means of (\ref{fourier}). Fejér \cite{Fe} proved that $\sigma
_{n}(f)$ converges to $f$ uniformly for any $2\pi $-periodic continuous
function. Lebesgue in \cite{Le} established almost everywhere convergence of 
$(C,1)$ means if $f\in L^{1}(\mathbb{T})$. The strong summability problem,
i.e. the convergence of the strong means 
\begin{equation}
\frac{1}{n}\sum\limits_{k=0}^{n-1}\left\vert S_{k}\left( x,f\right) -f\left(
x\right) \right\vert ^{p},\quad x\in \mathbb{T},\quad p>0,  \label{Hp}
\end{equation}%
was first considered by Hardy and Littlewood in \cite{H-L}. They showed that
for any $f\in L^{r}(\mathbb{T})~\left( 1<r<\infty \right) $ the strong means
tend to $0$ a.e. as $n\rightarrow \infty $. The trigonometric Fourier series
of $f\in L^{1}(\mathbb{T})$ is said to be $\left( H,p\right) $-summable at $%
x\in \mathbb{T}$ if the values ({\ref{Hp}}) converge to $0$ as $n\rightarrow
\infty $. The $\left( H,p\right) $-summability problem in $L^{1}(\mathbb{T})$
has been investigated by Marcinkiewicz \cite{Ma2} for $p=2$, and later by
Zygmund \cite{Zy2} for the general case $1\leq p<\infty $.

Let $\Phi :[0,\infty )\rightarrow \lbrack 0,\infty )$, $\Phi \left( 0\right)
=0$, be a continuous increasing function. We say a series with the partial
sums $s_{n}$ strong $\Phi $-summable to a limit $s$ if 
\begin{equation*}
\lim_{n\rightarrow \infty }\frac{1}{n}\sum\limits_{k=0}^{n-1}\Phi
(|s_{k}-s|)=0.
\end{equation*}%
In \cite{Os} Oskolkov first considered the a.e strong $\Phi $-summability
problem of Fourier series with exponentially growing $\Phi $. Namely, he
proved a.e strong $\Phi $-summability of Fourier series if $\ln \Phi
(t)=O(t/\ln \ln t)$ as $t\rightarrow \infty $.

In \cite{Ro} Rodin proved

\begin{Rodin}[Rodin]
If a continuous function $\Phi :[0,\infty )\rightarrow \lbrack 0,\infty )$, $%
\Phi \left( 0\right) =0$, satisfies the condition 
\begin{equation*}
\limsup_{t\rightarrow +\infty }\frac{\ln \Phi \left( t\right) }{t}<\infty ,
\end{equation*}%
then for any $f\in L^{1}(\mathbb{T})$ the relation 
\begin{equation}
\lim\limits_{n\rightarrow \infty }\frac{1}{n}\sum\limits_{k=0}^{n-1}\Phi
(|S_{k}\left( x,f\right) -f\left( x\right) |)=0  \label{t3}
\end{equation}%
holds for a. e. $x\in \mathbb{T}$.
\end{Rodin}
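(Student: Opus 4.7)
The plan is as follows. First, I would reduce the claim to the case of an exponentially growing $\Phi$. By hypothesis there exist $\lambda,A>0$ with $\Phi(t)\le A e^{\lambda t}$ for all $t\ge 0$, and by continuity with $\Phi(0)=0$, for every $\varepsilon>0$ there is $\delta>0$ with $\Phi(t)<\varepsilon$ on $[0,\delta]$. On the complement, $e^{\lambda t}-1\ge 1-e^{-\lambda\delta}$, so $\Phi(t)\le \varepsilon+C_\varepsilon(e^{\lambda t}-1)$ for all $t\ge 0$. Averaging pointwise over $k$, it suffices to prove that for every fixed $\lambda>0$ and every $f\in L^1(\mathbb T)$,
$$\lim_{n\to\infty}\frac{1}{n}\sum_{k=0}^{n-1}\bigl(e^{\lambda|S_k(x,f)-f(x)|}-1\bigr)=0\quad\text{a.e.}$$

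Next I would reduce to functions of arbitrarily small $L^1$-norm by trigonometric polynomial approximation. For $\eta>0$ choose a trigonometric polynomial $P$ with $\|f-P\|_1<\eta$ and set $g=f-P$. For $k>N:=\deg P$ we have $S_k(x,P)=P(x)$, so $S_k(x,f)-f(x)=S_k(x,g)-g(x)$, while the first $N$ terms contribute $O(N/n)\to 0$. Hence it remains to control
$$\limsup_{n\to\infty}\frac{1}{n}\sum_{k=N}^{n-1}\bigl(e^{\lambda|S_k(x,g)-g(x)|}-1\bigr)$$
for $g$ of arbitrarily small $L^1$ norm.

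The heart of the argument is an exponential weak-type maximal inequality: for each $\lambda>0$ there should exist $C_\lambda>0$ such that, for all $g\in L^1(\mathbb T)$ and $y>0$,
$$\Bigl|\Bigl\{x\in\mathbb T:\sup_{n}\frac{1}{n}\sum_{k=0}^{n-1}\bigl(e^{\lambda|S_k(x,g)-g(x)|}-1\bigr)>y\Bigr\}\Bigr|\le C_\lambda\frac{\|g\|_1}{y}.$$
I would prove this by a Calder\'on--Zygmund decomposition of $g$ at level $y$, writing $g=g_1+g_2$. The good part $g_1$ is essentially bounded by $y$ and can be handled via the Marcinkiewicz--Zygmund exponential integrability for strong means of bounded functions, reducing ultimately to an $L^2$ bound on the partial sums. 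The bad part $g_2=\sum_j b_j$, with each $b_j$ supported on a cube $Q_j$, mean zero, and $\int|b_j|\le y|Q_j|$, has total support of measure $\le C\|g\|_1/y$; outside $\bigcup 2Q_j$ its partial sums are controlled by direct Dirichlet kernel estimates exploiting the mean-zero condition, together with the weak-type $(1,1)$ bound for the conjugate function.

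With the maximal inequality established, the exceptional set where the limsup of the strong means exceeds $y$ has measure $\le C_\lambda \eta/y$, which tends to zero as $\eta\to 0$. A standard Borel--Cantelli argument over dyadic choices of $y$ and $\eta$ then produces the a.e. convergence asserted in \eqref{t3}. I expect the main obstacle to be the exponential weak-type inequality itself: naively summing weak-type $(1,1)$ bounds for the individual $S_k$ fails because $e^{\lambda\,\cdot}$ is not subadditive, so one must extract cancellation among the $S_k$ through a moment-generating-function computation in the spirit of Zygmund's treatment of $(H,p)$-summability for $p<\infty$, and then push the argument through the exponential range by a refined iteration controlling how $C_\lambda$ depends on $\lambda$.
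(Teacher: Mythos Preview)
This statement is Theorem~R, quoted from Rodin~\cite{Ro} as a background result in the introduction; the paper does not supply its own proof of it, so there is nothing in the paper against which to compare your proposal.

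For what it is worth, your reductions (to $\Phi(t)=e^{\lambda t}-1$, and then via polynomial approximation to $g$ of arbitrarily small $L^1$ norm) are correct and standard. The gap is exactly where you locate it: the exponential weak-type maximal inequality. Your Calder\'on--Zygmund sketch does not close it. For the bad part you assert that ``outside $\bigcup 2Q_j$ its partial sums are controlled by direct Dirichlet kernel estimates exploiting the mean-zero condition,'' but the Dirichlet kernel has no decay away from the support, and the mean-zero cancellation of each $b_j$ only buys a gain against the smooth (Hilbert-transform) part of the kernel, not the uniform-in-$n$ exponential integrability you need. Rodin's actual mechanism is different in kind: one shows that the sequence $k\mapsto S_k(x,f)$ on $\{0,\dots,n-1\}$ has a BMO-type oscillation controlled by a maximal operator of the sort $G_2f(x)$ used in the present paper; the linear-in-$p$ growth
\[
\Bigl(\tfrac{1}{n}\sum_{k=0}^{n-1}|S_k(x,f)|^p\Bigr)^{1/p}\lesssim p\,G_2f(x)
\]
(cf.\ Lemmas~\ref{sch1} and~\ref{sch2} here) is precisely the John--Nirenberg signature, and summing moments $\sum_k \lambda^k/k!$ against it yields the exponential bound, while the weak-$(1,1)$ estimate for $G_2$ handles the dependence on $x$. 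Your outline never identifies this BMO/John--Nirenberg step, and the placeholder ``moment-generating-function computation \dots\ refined iteration'' is not a substitute for it.
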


Karagulyan \cite{Ka1, Ka} proved that the exponential growth in Rodin's
theorem is optimal. Moreover, it was proved

\begin{Kara}[Karagulyan]
If a continuous increasing function $\Phi :[0,\infty )\rightarrow \lbrack
0,\infty ),\Phi \left( 0\right) =0$, satisfies the condition%
\begin{equation*}
\limsup_{t\rightarrow +\infty }\frac{\ln \Phi \left( t\right) }{t}=\infty ,
\end{equation*}%
then there exists a function $f\in L^{1}(\mathbb{T})$, for which the relation%
\begin{equation*}
\limsup_{n\rightarrow \infty }\frac{1}{n}\sum\limits_{k=0}^{n-1}\Phi \left(
\left\vert S_{k}\left( x,f\right) \right\vert \right) =\infty 
\end{equation*}%
holds everywhere on $\mathbb{T}$.
\end{Kara}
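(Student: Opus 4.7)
The plan is to construct a counterexample $f \in L^{1}(\mathbb{T})$ by superposing trigonometric polynomials $P_{m}$ on lacunary frequency blocks, arranged so that for every $x \in \mathbb{T}$ a definite proportion of the indices $k$ produce partial sums with $|S_{k}(x, P_{m})|$ above a threshold $t_{m}$ on which $\Phi$ blows up super-exponentially. From the hypothesis $\limsup_{t \to \infty} \ln \Phi(t)/t = \infty$ I would first select $t_{m} \nearrow \infty$ with $\Phi(t_{m}) \geq \exp(m^{2} t_{m})$; this supplies the growth that will overpower the normalizer $1/n$.

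For each $m$ I would build a trigonometric polynomial $P_{m}$ satisfying: (i) $\mathrm{spec}(P_{m}) \subset (A_{m}, B_{m})$ with $B_{m-1} \ll A_{m} \ll B_{m}$; (ii) $\|P_{m}\|_{1} \leq 2^{-m}$; (iii) for every $x \in \mathbb{T}$ the cardinality of
\begin{equation*}
E_{m}(x) = \{k \in [A_{m}, B_{m}] : |S_{k}(x, P_{m})| \geq t_{m}\}
\end{equation*}
is at least $\delta_{m}(B_{m} - A_{m})$ with $\delta_{m} \geq (m\log m)^{-1}$. The natural candidate is $P_{m}(x) = c_{m} \sum_{j} D_{N_{j}}(x - y_{j})$ with $\{y_{j}\}$ a fine $\varepsilon_{m}$-net on $\mathbb{T}$ and the frequencies $N_{j}$ tuned so that at every $x$ some Dirichlet kernel sits near a singularity, forcing partial sums of logarithmic size on a full-density subinterval of indices around $N_{j}$.

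Setting $f = \sum_{m} P_{m}$, we have $f \in L^{1}(\mathbb{T})$ by (ii). Fix $x \in \mathbb{T}$; by the spectral separation in (i), for $k \in [A_{m}, B_{m}]$,
\begin{equation*}
S_{k}(x, f) = S_{k}(x, P_{m}) + \sum_{j < m} P_{j}(x),
\end{equation*}
so on $E_{m}(x)$ one has $|S_{k}(x, f)| \geq t_{m} - C_{m}(x)$ with $C_{m}(x) := \sum_{j<m}|P_{j}(x)|$ a finite, $k$-independent constant. Hence
\begin{equation*}
\frac{1}{B_{m}} \sum_{k=0}^{B_{m}-1} \Phi(|S_{k}(x, f)|) \;\geq\; \delta_{m}\,\frac{B_{m}-A_{m}}{B_{m}}\, \Phi(t_{m} - C_{m}(x)),
\end{equation*}
and, arranging $A_{m}/B_{m} \to 0$, the super-exponential growth of $\Phi$ at $t_{m}$ overwhelms $\delta_{m}^{-1}$, yielding divergence at every $x$.

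The main obstacle is delivering (iii) with a lower bound holding at \emph{every} point rather than almost everywhere; this is what forbids any soft weak-type reduction and demands an explicit geometric placement of the shifts $y_{j}$ and frequencies $N_{j}$, together with pointwise lower bounds for truncated Dirichlet-type integrals, to cover $\mathbb{T}$ uniformly. Controlling the interaction of the blocks (so that $C_{m}(x)$ does not spoil the threshold $t_{m}$) is the secondary technical point, handled by forcing $t_{m}$ to grow much faster than any fixed level set of the tails $\sum_{j<m}|P_{j}|$.
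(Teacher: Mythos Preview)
The paper does not contain a proof of this statement: Theorem~K is quoted from Karagulyan \cite{Ka1,Ka} as background in the introduction, and the paper's own work is the two--dimensional Theorem~\ref{exp}. So there is no in--paper proof to compare your proposal against.

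That said, your outline is the standard architecture for such everywhere--divergence constructions and is broadly in the spirit of \cite{Ka1,Ka}: lacunary blocks carrying shifted Dirichlet kernels, a pointwise density lower bound on the ``bad'' partial sums, and a super--exponential choice of thresholds $t_m$. Two remarks. First, the order of choices matters: you fix $t_m$ before building the blocks, but you also need $t_m$ to dominate $\sup_x\sum_{j<m}|P_j(x)|$, which is only known once $P_1,\dots,P_{m-1}$ are in hand; the clean way is an inductive construction, choosing $t_m$ at stage $m$ (the hypothesis $\limsup_{t\to\infty}\ln\Phi(t)/t=\infty$ guarantees arbitrarily large admissible $t_m$). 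Second, you correctly identify (iii) as the heart of the matter. Getting a proportion $\delta_m$ of indices with $|S_k(x,P_m)|\ge t_m$ at \emph{every} $x$, with $\delta_m^{-1}$ growing slowly enough to be beaten by $\Phi(t_m)$, is a genuine piece of work; the cited papers supply the explicit placement of shifts and frequencies and the requisite pointwise lower bounds, and reproducing them is not a formality.
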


In this paper we study the exponential summability problem for the
rectangular partial sums of double Fourier series. Let $f\in L^{1}(\mathbb{T}%
^{2})$ be a function with Fourier series 
\begin{equation}
\sum_{m,n=-\infty }^{\infty }c_{nm} e^{i(mx+ny)},  \label{DF}
\end{equation}%
where 
\begin{equation*}
c_{nm} =\frac{1}{4\pi ^{2}}\iint\limits_{\mathbb{T}%
^{2}}f(x_{1},x_{2})e^{-i(mx_{1}+nx_{2})}dx_{1}dx_{2}
\end{equation*}%
are the Fourier coefficients of the function $f$. The rectangular partial
sums of (\ref{DF}) are defined by 
\begin{equation*}
S_{MN}\left(f\right)=S_{MN}\left( x_{1},x_{2},f\right)
=\sum_{m=-M}^{M}\sum_{n=-N}^{N}c_{nm}e^{i(mx_{1}+nx_{2})}.
\end{equation*}%
We denote by $L\log L\left( \mathbb{T}^{2}\right) $ the class of measurable
functions $f$, with 
\begin{equation*}
\iint\limits_{\mathbb{T}^{2}}|f|\log^{+} |f|<\infty ,
\end{equation*}%
where $\log ^{+}u:=\mathbb{I}_{(1,\infty )}\log u$, $u>0$. For the
rectangular partial sums of two-dimensional trigonometric Fourier series
Jessen, Marcinkiewicz and Zygmund \cite{JMZ} has proved for any $f\in L\log
L\left( \mathbb{T}^{2}\right) $ that 
\begin{equation*}
\lim\limits_{n,m\rightarrow \infty }\frac{1}{nm}\sum\limits_{i=0}^{n-1}\sum%
\limits_{j=0}^{m-1}\left( S_{ij}\left( x_{1},x_{2},f\right) -f\left(
x_{1},x_{2}\right) \right) =0
\end{equation*}%
for a. e. $\left( x_{1},x_{2}\right) \in \mathbb{T}^{2}$. They also showed
that for every non-negative function $\omega :[0,\infty )\rightarrow \lbrack
0,\infty )$ satisfying $\omega(t) \uparrow \infty$, $\omega \left( t\right)
\left( \log^+ t\right) ^{-1}\rightarrow 0$ as $t\rightarrow \infty $, there
exists a function $f$ such that $\left\vert f\right\vert \omega \left(
\left\vert f\right\vert \right) \in L^{1}\left( \mathbb{T}^{2}\right) $ and
the $\left(C,1,1\right) $ means of double Fourier series of $f$ diverge a.e..

The two dimensional a.e. strong rectangular $(H,p)$-summability, i.e. the
relation 
\begin{equation*}
\lim\limits_{n,m\rightarrow \infty }\frac{1}{nm}\sum\limits_{i=0}^{n-1}\sum%
\limits_{j=0}^{m-1}\left\vert S_{ij}\left( x_{1},x_{2},f\right) -f\left(
x_{1},x_{2}\right) \right\vert ^{p}=0\text { a.e. }
\end{equation*}%
was proved by Gogoladze \cite{Gog} for $f\in L\log L\left( \mathbb{T}%
^{2}\right) $. These results show that in two dimensional case the optimal
class of functions for $\left( C,1,1\right) $ summability and strong
summability coincide. That is the class of functions $L\log L\left( \mathbb{T%
}^{2}\right) $.

We prove the following

\begin{theorem}
\label{exp} If  a continuous increasing function $\Phi :[0,\infty
)\rightarrow \lbrack 0,\infty )$, $\Phi \left( 0\right) =0$, satisfies the
condition 
\begin{equation}
\limsup_{t\rightarrow +\infty }\frac{\ln \Phi \left( t\right) }{\sqrt{t/\ln
\ln t}}<\infty ,  \label{t4}
\end{equation}%
then for any $f\in L\log L\left( \mathbb{T}^{2}\right) $ the relation 
\begin{equation}
\lim\limits_{n,m\rightarrow \infty }\frac{1}{nm}\sum\limits_{i=0}^{n-1}\sum%
\limits_{j=0}^{m-1}\Phi \left( \left\vert S_{ij}\left( x_{1},x_{2},f\right)
-f(x_{1},x_{2})\right\vert \right) =0  \label{t1}
\end{equation}%
holds for a. e. $\left( x_{1},x_{2}\right) \in \mathbb{T}^{2}$.
\end{theorem}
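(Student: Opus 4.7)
The plan is to derive \eqref{t1} from Gogoladze's polynomial strong summability via a uniform a priori bound. A key observation is that hypothesis \eqref{t4} is preserved under raising $\Phi$ to any power $1+\alpha$, so it suffices to prove the a.e.\ finiteness
\[K(x):=\sup_{n,m\geq 1}\frac{1}{nm}\sum_{i<n,\,j<m}\Phi^{1+\alpha}\bigl(|S_{ij}(f)(x)-f(x)|\bigr)<\infty\]
for some fixed small $\alpha>0$. Granted this, for $\varepsilon>0$ we split
\[\frac{1}{nm}\sum\Phi(|S_{ij}-f|)\leq \Phi(\varepsilon)+\frac{1}{nm}\sum_{|S_{ij}-f|>\varepsilon}\Phi(|S_{ij}-f|),\]
and apply H\"older with exponents $1+\alpha$ and $(1+\alpha)/\alpha$ to bound the second sum by $K(x)^{1/(1+\alpha)}\bigl(\frac{1}{nm}\#\{(i,j):|S_{ij}-f|>\varepsilon\}\bigr)^{\alpha/(1+\alpha)}$. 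The density here tends to $0$ a.e.\ by Gogoladze's theorem with $p=1$ together with Chebyshev's inequality, so $\limsup_{n,m\to\infty}\frac{1}{nm}\sum\Phi(|S_{ij}-f|)\leq \Phi(\varepsilon)$ a.e., and letting $\varepsilon\downarrow 0$ yields \eqref{t1}.

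To establish the a.e.\ finiteness of $K(x)$ I would first prove the two-dimensional moment inequality
\[\sup_{n,m\geq 1}\left(\frac{1}{nm}\sum_{i<n,\,j<m}\bigl|S_{ij}(f)(x)-f(x)\bigr|^{p}\right)^{1/p}\leq C\,p\,\mathcal{N}(f)(x),\qquad p\geq 1,\]
valid a.e., for some maximal-type operator $\mathcal{N}$ finite a.e.\ on $L\log L(\mathbb{T}^{2})$. Granted this, pick $t_{0}$ so large that \eqref{t4} gives $\Phi(t)\leq \exp\bigl(c\sqrt{t/\ln\ln t_{0}}\bigr)$ for $t\geq t_{0}$, whence
\[\Phi(t)\leq \Phi(t_{0})+\sum_{k\geq 1}\frac{c^{k}\,t^{k/2}}{k!\,(\ln\ln t_{0})^{k/2}}.\]
Averaging over $(i,j)$, applying the moment inequality with $p=k/2$, and using Stirling's formula $k!\sim (k/e)^{k}$ bound the $k$-th summand by $\bigl(C'\,\mathcal{N}(f)(x)/\sqrt{k\ln\ln t_{0}}\bigr)^{k}$, which is summable for a.e.\ $x$. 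Applying the same argument to $\Phi^{1+\alpha}$ (which still satisfies \eqref{t4} with constant $(1+\alpha)c$) yields $K(x)<\infty$ a.e. The exponent $1/2$ in $\sqrt{t/\ln\ln t}$ is precisely what matches the linear growth $O(p)$ in the moment inequality against the factorial, while the doubly logarithmic factor in \eqref{t4} absorbs the $(\ln\ln t_{0})^{-k/2}$ weight.

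The main obstacle is the moment inequality itself. My plan for it follows the one-dimensional Rodin--Karagulyan template. First, the Jessen--Marcinkiewicz--Zygmund theorem gives $\sigma_{ij}f\to f$ a.e.\ and the maximal Ces\`aro mean $\sigma^{*}f$ finite a.e.\ on $L\log L(\mathbb{T}^{2})$, which allows the replacement of $S_{ij}-f$ by the difference $S_{ij}-\sigma_{ij}$. Second, majorize the latter pointwise by sums of one-dimensional conjugate Dirichlet-type partial sums in each variable, whose $L^{p}$-norms are controlled by $C\,p$ (Pichorides). Third, combine these estimates through Littlewood--Paley / square-function arguments to bound $\frac{1}{nm}\sum|S_{ij}-\sigma_{ij}|^{p}$ pointwise by $(Cp)^{p}\,\mathcal{N}(f)(x)^{p}$, where $\mathcal{N}$ is a maximal operator of weak type $L\log L\to L^{1,\infty}$. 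Securing the linear $p$-dependence in the rectangular two-dimensional setting while ensuring $\mathcal{N}(f)<\infty$ a.e.\ on $L\log L$ (and not on a larger Zygmund class) is the most delicate point.
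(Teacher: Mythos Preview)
Your reduction scheme---split into $|S_{ij}-f|\le\varepsilon$ and $>\varepsilon$, control the density of the second set via Chebyshev and Gogoladze's $(H,1)$ result, and handle $\Phi$ through a power-series expansion combined with moment bounds---is close in spirit to the paper's argument. The decisive gap is the moment inequality you postulate,
\[
\sup_{n,m}\Bigl(\frac{1}{nm}\sum_{i<n,\,j<m}|S_{ij}(f)-f|^{p}\Bigr)^{1/p}\le C\,p\,\mathcal{N}(f)(x).
\]
A linear dependence on $p$ is not available in the rectangular two-dimensional setting. What the paper actually proves is a weak-type inequality with growth $p^{2}\ln\ln(p+2)$, obtained by iterating one-dimensional estimates: Gabisonia's operator $G_{p}$ handles one variable with a factor $p\ln\ln p$ (through Oskolkov's lemma and a Calder\'on--Zygmund decomposition), and Schipp's inequality contributes a further factor $p$ in the second variable. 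Already for tensor products $f=g\otimes h$ the two one-dimensional factors of $p$ multiply, so $p^{2}$ is the natural order; the authors remark after the theorem that removing even the $\ln\ln$ factor (equivalently, proving the result under $\ln\Phi(t)=O(\sqrt{t})$) is open.

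This matters twice over. First, your assertion that ``the exponent $1/2$ in $\sqrt{t/\ln\ln t}$ precisely matches the linear growth $O(p)$'' is backwards: a bound $O(p^{a})$ on the $p$-th root of the $p$-th moment supports $\Phi(t)\lesssim\exp(t^{1/a})$ (balance $k!$ against $k^{ak}$ via Stirling), so a linear bound would actually yield the one-dimensional Rodin condition $\ln\Phi(t)=O(t)$, far stronger than what is being claimed. It is the \emph{quadratic} growth that forces the square root. Second, once $p^{2}\ln\ln p$ is substituted for $p$, your series with the fixed weight $(\ln\ln t_{0})^{-k/2}$ diverges: the $k$-th term acquires a factor $(\ln\ln k)^{k/2}$ from the moment bound which $(\ln\ln t_{0})^{-k/2}$ cannot absorb. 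The paper circumvents this with the expansion
\[
\exp\Bigl(\sqrt{s/\ln\ln(s+2)}\Bigr)\le\sum_{k\ge1}\Bigl(\frac{d}{k}\sqrt{\frac{s}{\ln\ln(k+2)}}\Bigr)^{k},
\]
in which the $k$-dependent factor $\ln\ln(k+2)$ cancels the $\ln\ln p$ at $p\sim k/2$, and by first proving that the normalised moment tends to \emph{zero} a.e.\ (not merely stays bounded), so that the resulting series $\sum_{k}(d\sqrt{A})^{k}X_{n,m}^{k/2}$ with $X_{n,m}\to0$ goes to zero. Your plan of establishing only $K(x)<\infty$ would need both of these adjustments to survive.
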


As a corollary of this result we get the Gogoladze \cite{Gog} theorem on
a.e. $H^p$-summability of double Fourier series. From Jessen, Marcinkiewicz
and Zygmund \cite{JMZ} theorem it follows that the class $ L\log L\left( 
\mathbb{T}^{2}\right) $ in our theorem is necessary  in the context of
strong summability question. That is, it is not possible  to give a larger
convergence space than $L\log L\left( \mathbb{T} ^{2}\right) $. Our method
of proof do not allow to get (\ref{t1}) under the weaker condition  
\begin{equation}  \label{t6}
\limsup_{t\rightarrow +\infty }\frac{\ln \Phi \left( t\right) }{\sqrt t}%
<\infty.
\end{equation}
There is a conjecture that \e {t6} is the optimal bound of $\Phi$ ensuring
a.e. rectangular strong summability (\ref{t1}) for every function $f\in
L\log L\left( \mathbb{T}^{2}\right) $.

The results on strong summability and approximation by trigonometric Fourier
series have been extended for several other orthogonal systems, see Schipp 
\cite{Sch1,Sch2,Sch3}, Leindler \cite{Le1,Le2,Le3,Le4}, Totik \cite%
{tot,To1,To2,To3}, Goginava, Gogoladze \cite{GGCA1,GGSMH}, Goginava,
Gogoladze, Karagulyan \cite{GGKCA2}, Gat, Goginava, Karagulyan \cite%
{GGKAM,GGKJMAA}, Weisz \cite{WeC2}-\cite{WeJFS}.

\section{Auxiliary lemmas}

The notation $a\lesssim b$ will stand for $a<c\cdot b$, where $c>0$ is an
absolute constant. We shall write $a\sim b$ if the relations $a\lesssim b$
and $b\lesssim a$ hold at the same time. Everywhere below $q>1$ will be used
as the conjugate of $p>1$, that is $1/p+1/q=1$. $[a]$ denotes the integer
part of $a\in\mathbb{R}  $.

The maximal function of a function $f\in L^{1}\left( \mathbb{T}\right) $ is
defined by 
\begin{equation*}
Mf\left( x\right) :=\sup\limits_{I:\,x\in I\subset \mathbb{T}}\frac{1}{%
\left\vert I\right\vert }\int\limits_{I}\left\vert f\left( y\right)
\right\vert dy,
\end{equation*}%
where $I$ is an open interval. The following one dimensional operators
introduced by Gabisonia \cite{Gab} are significant tools in the
investigations of strong summability problems: 
\begin{equation*}
G_{p}^{\left( n\right) }f(x):=\left( \sum\limits_{k=1}^{\left[ n\pi \right]
}\left( \frac{n}{k}\int\limits_{\frac{k-1}{n}}^{\frac{k}{n}}\left\vert
f\left( x+t\right) \right\vert +\left\vert f\left( x-t\right) \right\vert
dt\right) ^{q}\right) ^{1/q},
\end{equation*}%
\begin{equation*}
G_{p}f(x):=\sup\limits_{n\in \mathbb{N}}G_{p}^{\left( n\right) }f(x).
\end{equation*}%
Oskolkov's following lemma plays key role in the proof of the basic lemma.

\begin{lemma}[Oskolkov, \protect\cite{Os}]
For any family of pairwise disjoint intervals $\Delta _{k}\subset \mathbb{T}$
with centers $c_{k}$ it holds the inequality 
\begin{equation}\label{a1}
\left\vert \left\{ x\in \mathbb{T}:\,\sup_{p>1}\frac{\sum_{j}\left( 
\frac{|\Delta _{j}|}{|x-c_{j}|+|\Delta _{j}|}\right) ^{q}}{p\ln \ln (p+2)}%
>\lambda \right\} \right\vert \lesssim \exp (-c\lambda ),\,\lambda >0,
\end{equation}%
where $c>0$ is an absolute constant.
\end{lemma}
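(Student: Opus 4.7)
\medskip
\noindent\textbf{Proof proposal.} Write $g_j(x) := |\Delta_j|/(|x-c_j|+|\Delta_j|)\in(0,1]$, $F_p(x) := \sum_j g_j(x)^q$ with $q=p/(p-1)$, and $\phi(p) := p\ln\ln(p+2)$. My plan is (i) to reduce the continuous supremum over $p>1$ to a countable sequence by monotonicity, (ii) to establish at each fixed $p$ an exponential tail bound for $F_p$ via integer moment estimates, and (iii) to combine the scales through a union bound.

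For (i), note that $g_j\le 1$ and $q$ is decreasing in $p$, so each $g_j^q$ (hence $F_p$) is non-decreasing in $p$; $\phi$ is also non-decreasing. Therefore on each dyadic block $p\in[2^k,2^{k+1}]$ one has $F_p/\phi(p)\le F_{2^{k+1}}/\phi(2^k)$, and the range $p\in(1,2]$ reduces to $p=2$ via $g_j^q\le g_j^2$ for $q\ge 2$. Thus it suffices to take the supremum over the sequence $p_n := 2^n$, $n\ge 1$.

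For (ii), the target is
\begin{equation*}
|\{x\in\mathbb{T}: F_p(x)>\mu\}|\lesssim \exp(-c\mu(q-1)),\qquad \mu>0,
\end{equation*}
which follows by Markov's inequality from integer moment bounds $\int_\mathbb{T} F_p^m \le m!\,(C/(q-1))^m$, optimized in $m\asymp\mu(q-1)/(eC)$. The $m=1$ case is the elementary estimate $\int g_j^q\lesssim |\Delta_j|/(q-1)$ combined with $\sum_j|\Delta_j|\le 2\pi$ from disjointness. For general $m$, expanding
\begin{equation*}
\int F_p^m = \sum_{(j_1,\dots,j_m)}\int_\mathbb{T}\prod_{i=1}^m g_{j_i}^q,
\end{equation*}
one groups the $\Delta_j$'s into dyadic size classes and uses their geometric separation---only $O(1)$ intervals of a given size contribute significantly near any fixed $x$---to control the multi-index clustering and produce the factor $m!$ in place of $m^m$. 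Substituting $\mu=\lambda\phi(p_n)$ and using $p(q-1)=q$ gives $|\{F_{p_n}/\phi(p_n)>\lambda\}|\lesssim\exp(-c\lambda q_n\ln\ln(p_n+2))$ per scale; splitting the sum over $n$ at $n_*\asymp e^{c\lambda/4}$ and observing that $q_n\ln\ln(p_n+2)$ is bounded below by an absolute positive constant and grows like $\ln n$ for large $n$ yields the total bound $\lesssim e^{-c'\lambda}$.

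\emph{Main obstacle.} The crux is the moment bound $\int F_p^m\le m!(C/(q-1))^m$ with $C$ absolute. Controlling the multi-index integrals when many of the $\Delta_{j_i}$ cluster around a common point at vastly different dyadic sizes is the main technical difficulty; the iterated-logarithm normalization $p\ln\ln(p+2)$ is precisely what pays for the scale-counting loss in the union bound of step (iii), and a weaker normalization would introduce an extra logarithmic factor that spoils the exponential decay.
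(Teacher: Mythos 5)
The paper does not actually prove this lemma --- it is quoted verbatim from Oskolkov \cite{Os} --- so your proposal can only be judged on its own merits. Its skeleton is sound: the monotonicity of $g_j^q$ in $p$ (since $0<g_j\le 1$ and $q$ decreases in $p$) does reduce the supremum to dyadic $p_n=2^n$ at the cost of an absolute constant; the per-scale bound $|\{F_p>\mu\}|\lesssim\exp(-c\mu(q-1))$ is a true statement and, after the substitution $\mu=\lambda\,p\ln\ln(p+2)$ and the identity $p(q-1)=q$, it does yield per-scale bounds $\exp(-c\lambda q_n\ln\ln(p_n+2))$ whose sum over $n$ is $\lesssim e^{-c'\lambda}$ exactly as you describe. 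So steps (i) and (iii) are fine.

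The genuine gap is step (ii), which is the entire content of the lemma. You reduce everything to the moment bound $\int_{\mathbb{T}}F_p^m\le m!\,(C/(q-1))^m$ with $C$ absolute, verify only $m=1$, and for general $m$ offer one sentence (``group into dyadic size classes \dots\ control the multi-index clustering'') that names the difficulty rather than resolving it --- as you yourself concede under ``Main obstacle.'' The heuristic ``only $O(1)$ intervals of a given size contribute near a fixed $x$'' is not by itself enough: at a fixed $x$ the intervals of size $\approx 2^{-s}$ at distance $\approx 2^{-t}$ number up to $2^{s-t}$ and contribute $\approx 2^{-(s-t)(q-1)}$ in total, so the bookkeeping across both the size and distance scales, iterated $m$ times with the factor $m!$ (not $m^m$) preserved, is precisely where the proof lives, and it is absent. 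A cleaner route to the same per-$p$ estimate, and the one consistent with how Oskolkov and Rodin argue, is to show that $F_p$ has mean $\lesssim 1/(q-1)$ and BMO norm $\lesssim 1/(q-1)$ uniformly in $p\ge 2$ (split the $\Delta_j$ for a test interval $I$ into those contained in $3I$, controlled by disjointness, and the rest, controlled by the pointwise oscillation of $g_j^q$ on $I$), and then invoke John--Nirenberg; this delivers $|\{F_p>\mu\}|\lesssim e^{-c(q-1)\mu}$ without any multi-index combinatorics. As written, your argument is an accurate reduction of the lemma to its hardest ingredient, not a proof of it.
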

One can easily check that
\begin{equation*}
\sup_{p>1}\frac{\left(\sum_{j}\left( 
	\frac{|\Delta _{j}|}{|x-c_{j}|+|\Delta _{j}|}\right)^{q}\right)^{1/q}}{p\ln \ln (p+2)}\lesssim \left\{1,\sup_{p>1}\frac{\sum_{j}\left( 
	\frac{|\Delta _{j}|}{|x-c_{j}|+|\Delta _{j}|}\right) ^{q}}{p\ln \ln (p+2)}\right\}.
\end{equation*}
Combining this with \e {a1}, we get
\begin{equation}\label{a2}
\int_{\ZT}\sup_{p>1}\frac{\left(\sum_{j}\left( 
	\frac{|\Delta _{j}|}{|x-c_{j}|+|\Delta _{j}|}\right)^{q}\right)^{1/q}}{p\ln \ln (p+2)}\lesssim 1.
\end{equation}
\begin{lemma}
\label{mainlemma}If $f\in L^{1}\left( \mathbb{T}\right) $, then%
\begin{equation}
\left\vert \left\{ x\in \mathbb{T}:\sup\limits_{p>1}\frac{G_pf(x) }{%
p\ln\ln (p+2)}> \lambda \right\} \right\vert \lesssim \left( \frac{1}{%
\lambda }\left\Vert f\right\Vert _{1}\right) ^{1/2},\quad \lambda>0.
\label{est}
\end{equation}
\end{lemma}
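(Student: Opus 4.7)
The plan is to perform a Calder\'on--Zygmund decomposition $f=g+b$ at a level $\alpha$ (to be optimized as $\alpha\sim\sqrt{\lambda\|f\|_{1}}$), bound the ``good'' part pointwise and uniformly, and reduce the ``bad'' part to the integrated form \eqref{a2} of Oskolkov's lemma. One has $|g|\le c_{0}\alpha$ a.e., $\|g\|_{1}\le\|f\|_{1}$, $b=\sum_{j}b_{j}$ with $\mathrm{supp}\,b_{j}\subset\Delta_{j}$ (pairwise disjoint with centres $c_{j}$), $\int_{\Delta_{j}}|b_{j}|\le c_{0}\alpha|\Delta_{j}|$, and $\sum_{j}|\Delta_{j}|\lesssim\|f\|_{1}/\alpha$. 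For $g$ the pointwise bound gives $(n/k)\int_{(k-1)/n}^{k/n}|g(x\pm t)|\,dt\le C\alpha/k$, hence
\[
G_{p}^{(n)}g(x)\lesssim\alpha\Bigl(\sum_{k\ge 1}k^{-q}\Bigr)^{1/q}\lesssim\alpha p
\]
uniformly in $n$ and $x$, and dividing by $p\ln\ln(p+2)$ yields $\sup_{p>1}G_{p}g(x)/(p\ln\ln(p+2))\lesssim\alpha$ on all of $\mathbb{T}$. Provided $\alpha\le c_{1}\lambda$, this contribution does not exceed $\lambda/2$.

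For each $n$ split $b=b_{\mathrm{sm}}^{(n)}+b_{\mathrm{bg}}^{(n)}$ into the parts supported on $\{|\Delta_{j}|\le 1/n\}$ and on $\{|\Delta_{j}|>1/n\}$. Disjointness of the $\Delta_{j}$'s implies that the small ones contribute at most $\lesssim 1/n$ of total length inside any $I_{k}+x=[x+(k-1)/n,x+k/n]$, so $\int_{I_{k}+x}|b_{\mathrm{sm}}^{(n)}|\le C\alpha/n$ and the same computation as for $g$ gives $G_{p}^{(n)}b_{\mathrm{sm}}^{(n)}(x)\lesssim\alpha p$. For the big part, working outside $E:=\bigcup_{j}2\Delta_{j}$ so that $|x-c_{j}|\gtrsim|\Delta_{j}|$, introduce the ``hitting range'' $R_{j}=\{k:(I_{k}+x)\cap\Delta_{j}\ne\emptyset\}$. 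Since the big $\Delta_{j}$'s are pairwise disjoint and strictly longer than $1/n$, at most three big indices $j$ can belong to $A_{k}=\{j:k\in R_{j}\}$, and on $R_{j}$ one has $n/k\lesssim 1/|x-c_{j}|$ together with $\sum_{k\in R_{j}}\int_{I_{k}+x}|b_{j}|\le c_{0}\alpha|\Delta_{j}|$. Combining $(\sum_{j\in A_{k}}a_{j})^{q}\lesssim\sum_{j\in A_{k}}a_{j}^{q}$ with $\sum_{k\in R_{j}}t_{k}^{q}\le(\sum_{k\in R_{j}}t_{k})^{q}$ (both valid for nonnegative sequences, with constants uniform in $p>1$), one obtains
\[
G_{p}^{(n)}b_{\mathrm{bg}}^{(n)}(x)\lesssim\alpha\Bigl(\sum_{j}\Bigl(\tfrac{|\Delta_{j}|}{|x-c_{j}|+|\Delta_{j}|}\Bigr)^{q}\Bigr)^{1/q},\qquad x\notin E.
\]

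Summing these bounds, dividing by $p\ln\ln(p+2)$, and taking the supremum in $p>1$ and in $n$ yield, for $x\notin E$,
\[
\sup_{p>1}\frac{G_{p}b(x)}{p\ln\ln(p+2)}\lesssim\alpha+\alpha\sup_{p>1}\frac{\bigl(\sum_{j}(|\Delta_{j}|/(|x-c_{j}|+|\Delta_{j}|))^{q}\bigr)^{1/q}}{p\ln\ln(p+2)},
\]
an expression whose integral over $\mathbb{T}$ is $\lesssim\alpha$ thanks to \eqref{a2}. Chebyshev's inequality then gives $|\{x:\sup_{p>1}G_{p}b(x)/(p\ln\ln(p+2))>\lambda/2\}|\lesssim\alpha/\lambda$, while $|E|\lesssim\|f\|_{1}/\alpha$ and the good-part contribution is absorbed by the constraint on $\alpha$. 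Choosing $\alpha=c\sqrt{\lambda\|f\|_{1}}$ when this is compatible with $\alpha\le c_{1}\lambda$ (that is, when $\|f\|_{1}\lesssim\lambda$), and invoking the trivial bound $|\mathbb{T}|\lesssim 1\lesssim(\|f\|_{1}/\lambda)^{1/2}$ in the complementary regime, delivers the claimed estimate.

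The main obstacle I foresee is the big-part pointwise estimate: one has no control on $|b_{j}|$ pointwise, only the integral bound $c_{0}\alpha|\Delta_{j}|$, so the $\ell^{q}$-sum in $k$ cannot be handled by crude domination of each $T_{k}^{(n)}b_{j}$. The argument must exploit both the fact that distinct big $\Delta_{j}$'s produce essentially disjoint ranges $R_{j}$ (clustered around $k\approx n|x-c_{j}|$) and the uniformity in $p>1$ of the constants $|A_{k}|^{(q-1)/q}$ and $3^{(q-1)/q}$ arising when switching between $\ell^{1}$ and $\ell^{q}$. This latter point is what permits a bound that remains valid as $p\to 1^{+}$ and hence plays well against the $p\ln\ln(p+2)$-weight in \eqref{a2}.
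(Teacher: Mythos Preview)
Your proof is correct and follows essentially the same strategy as the paper: a Calder\'on--Zygmund stopping-time construction, a pointwise reduction of the ``bad'' contribution to Oskolkov's geometric sum $\bigl(\sum_j(|\Delta_j|/(|x-c_j|+|\Delta_j|))^{q}\bigr)^{1/q}$, and then Chebyshev together with the integrated estimate \e{a2}.

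The paper's organization differs in one respect that is worth noting. Instead of decomposing $f=g+b$ and then further splitting $b$ into small-interval and big-interval parts, the paper leaves $f$ intact and splits the \emph{index set} $\{1,\dots,[n\pi]\}$ according to whether the sampling interval $\delta_k^{n}(x):=x+[(k-1)/n,\,k/n]$ is contained in the level set $R_\lambda=\{Mf>\sqrt{\lambda}\}$. Indices with $\delta_k^{n}(x)\not\subset R_\lambda$ automatically satisfy $n\int_{\delta_k^{n}(x)}|f|\le\sqrt{\lambda}$, which plays the role of your good-part estimate and your small-interval estimate simultaneously; indices with $\delta_k^{n}(x)\subset\Delta_i$ force $|\Delta_i|\ge 1/n$ and correspond exactly to your big-interval case. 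This removes the need for the extra small/big dichotomy. The paper also fixes the CZ height at $\sqrt{\lambda}$ and recovers the dependence on $\|f\|_1$ at the end by homogeneity, rather than optimizing $\alpha\sim\sqrt{\lambda\|f\|_1}$ as you do; the two are equivalent.
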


\begin{proof}
It is enough to prove the same estimate for the modified operators 
\begin{equation}
G_{p}^{\prime }f(x):=\sup_{n\in \mathbb{N}}\left( \sum\limits_{k=1}^{\left[
n\pi \right] }\left( \frac{n}{k}\int\limits_{\frac{k-1}{n}}^{\frac{k}{n}%
}\left\vert f\left( x+t\right) \right\vert dt\right) ^{q}\right) ^{1/q}.
\label{GanOp2}
\end{equation}%
Using the Calderon-Zygmund lemma, for the maximal function we get the
relation 
\begin{equation}
R_{\lambda }:=\left\{ x\in \mathbb{T}:Mf\left( x\right) >\sqrt{\lambda }%
\right\} =\bigcup\limits_{k=0}^{\infty }\Delta _{k},\quad \lambda >0,
\label{rep}
\end{equation}%
where $\Delta _{k}\subset \mathbb{T}$ are disjoint open intervals such that%
\begin{align}
& \sqrt{\lambda }\leq \frac{1}{\left\vert \Delta _{k}\right\vert }%
\int\limits_{\Delta _{k}}\left\vert f\left( t\right) \right\vert dt\leq 2%
\sqrt{\lambda },  \label{two-side} \\
& \left\vert R_{\lambda }\right\vert \leq \frac{1}{\sqrt{\lambda }}%
\left\Vert f\right\Vert _{1}.  \label{one-est}
\end{align}%
Denote $\delta _{k}^{n}:=\left[ \left( k-1\right) /n,k/n\right] $ and $%
\delta _{k}^{n}\left( x\right) :=x+\delta _{k}^{n}$. Separating the terms in
the sum (\ref{GanOp2}) with $k$ satisfying $\delta _{k}^{n}\left( x\right)
\subset R_{\lambda }$, we get%
\begin{align}
G_{p}^{\prime }f(x)& \leq \sup\limits_{n\in \mathbb{N}}\left(
\sum\limits_{k:\delta _{k}^{n}\left( x\right) \subset R_{\lambda }}\left( 
\frac{n}{k}\int\limits_{\frac{k-1}{n}}^{\frac{k}{n}}\left\vert f\left(
x+t\right) \right\vert dt\right) ^{q}\right) ^{1/q}  \label{gabsplit} \\
& \qquad +\sup\limits_{n\in \mathbb{N}}\left( \sum\limits_{k:\delta
_{k}^{n}\left( x\right) \not\subset R_{\lambda }}\left( \frac{n}{k}%
\int\limits_{\frac{k-1}{n}}^{\frac{k}{n}}\left\vert f\left( x+t\right)
\right\vert dt\right) ^{q}\right) ^{1/q}  \notag \\
& :=I+II.  \notag
\end{align}

From the definition of $R_{\lambda }$ in the case $\delta _{k}^{n}\left(
x\right) \not\subset R_{\lambda }$ it follows that 
\begin{equation*}
n\int\limits_{\frac{k-1}{n}}^{\frac{k}{n}}\left\vert f\left( x+t\right)
\right\vert dt\leq \sqrt{\lambda }.
\end{equation*}%
Thus we conclude 
\begin{equation}
II\leq \sqrt{\lambda }\left( \sum\limits_{k=1}^{\infty }\frac{1}{k^{q}}%
\right) ^{1/q}\lesssim \sqrt{\lambda }\left( \frac{1}{q-1}\right)
^{1/q}\lesssim p\sqrt{\lambda }.  \label{II}
\end{equation}%
Given $x\in \mathbb{T}$ set 
\begin{equation*}
k_{i}(x)=\left\{ 
\begin{array}{llr}
\min \left\{ k:\delta _{k}^{n}\left( x\right) \subset \Delta _{i}\right\}  & %
\hbox{ if }\left\{ k:\delta _{k}^{n}\left( x\right) \subset \Delta
_{i}\right\} \neq \varnothing , &  \\ 
\infty  & \hbox { if }\left\{ k:\delta _{k}^{n}\left( x\right) \subset
\Delta _{i}\right\} =\varnothing . & 
\end{array}%
\right. 
\end{equation*}%
Denote $\widetilde{R}_{\lambda }:=\bigcup\limits_{k=1}^{\infty }3\Delta _{k}$
and take an arbitrary point $x\in \mathbb{T}\backslash \widetilde{R}%
_{\lambda }$. One can easily check that if $k_{i}(x)\neq \infty $, then 
\begin{equation*}
\Delta _{i}\ni \frac{k_{i}\left( x\right) }{n}\sim \left\vert
x-c_{i}\right\vert ,
\end{equation*}%
where $c_{i}$ is the center of the interval $\Delta _{i}$. Thus for any $%
x\notin \widetilde{R}_{\lambda }$ we obtain 
\begin{align}
I& =\sup_{n\in \mathbb{N}}\left( \sum\limits_{i=1}^{\infty
}\sum\limits_{k:\delta _{k}^{n}\left( x\right) \subset \Delta _{i}}\left( 
\frac{n}{k}\int\limits_{\delta _{k}^{n}\left( x\right) }\left\vert f\left(
t\right) \right\vert dt\right) ^{q}\right) ^{1/q}  \label{I1} \\
& \leq \sup_{n\in \mathbb{N}}\left( \sum\limits_{i=1}^{\infty }\left(
\sum\limits_{k:\delta _{k}^{n}\left( x\right) \subset \Delta _{i}}\frac{n}{k}%
\int\limits_{\delta _{k}^{n}\left( x\right) }\left\vert f\left( t\right)
\right\vert dt\right) ^{q}\right) ^{1/q}  \notag \\
& \le \sup_{n\in \mathbb{N}}\left( \sum\limits_{i=1}^{\infty }\left( 
\frac{n\left\vert \Delta _{i}\right\vert }{k_{i}\left( x\right) }\frac{1}{%
\left\vert \Delta _{i}\right\vert }\int\limits_{\Delta _{i}}\left\vert
f\left( t\right) \right\vert dt\right) ^{q}\right) ^{1/q}  \notag \\
& \lesssim \sqrt{\lambda }\sup_{n}\left( \sum\limits_{i=1}^{\infty }\left( 
\frac{n\left\vert \Delta _{i}\right\vert }{k_{i}\left( x\right) }\right)
^{q}\right) ^{1/q}  \notag \\
& \lesssim \sqrt{\lambda }\left( \sum\limits_{i=1}^{\infty }\left( \frac{%
\left\vert \Delta _{i}\right\vert }{\left\vert x-c_{i}\right\vert +|\Delta
_{i}|}\right) ^{q}\right) ^{1/q},\quad x\notin \widetilde{R}_{\lambda }. 
\notag
\end{align}

Using Chebyshev's inequality, from \e {a2}, (\ref{II}) and (\ref{I1}) it follows that 
\begin{align*}
&\left\vert \left\{ x\in \mathbb{T}\backslash \widetilde{R}_{\lambda
}:\sup\limits_{p>1}\frac{G_{p}^{\prime }f(x) }{p\ln\ln (p+2)}> \lambda
\right\} \right\vert \\
&\qquad\lesssim \left\vert \left\{ x\in \mathbb{T}\backslash \widetilde{R}%
_{\lambda }:\sqrt{\lambda} \left(1+\sup\limits_{p>1}\frac{%
\left(\sum_j\left(\frac{|\Delta_j|}{|x-c_j|+|\Delta_j|}\right)^q\right)^{1/q}%
}{p\ln\ln (p+2)}\right)\geq c\lambda \right\} \right\vert  \notag \\
&\qquad\lesssim \frac{1}{\sqrt{\lambda}}\int\limits_{\mathbb{T}%
}\sup\limits_{p>1}\frac{\left(\sum_j\left(\frac{|\Delta_j|}{%
|x-c_j|+|\Delta_j|}\right)^q\right)^{1/q}}{p\ln\ln (p+2)}dx  \notag \\
&\qquad \lesssim \frac{1}{\sqrt{\lambda}},  \notag
\end{align*}
for an appropriate absolute constant $c>0$. Applying homogeneity, one can get 
\begin{equation}
\left\vert \left\{ x\in \mathbb{T}\backslash \widetilde{R}_{\lambda
}:\sup\limits_{p>1}\frac{G_{p}^{\prime }f(x) }{p\ln\ln (p+2)}> \lambda
\right\} \right\vert\lesssim \left(\frac{\|f\|_1}{\lambda}%
\right)^{1/2},\quad \lambda>0.  \label{I2}
\end{equation}

Consequently, from (\ref{one-est})-(\ref{I2}) we get%
\begin{align*}
&\left\vert \left\{ x\in \mathbb{T}:\sup\limits_{p>1}\frac{G_{p}^{\prime
}f(x) }{p\ln\ln (p+2) }>\lambda \right\} \right\vert \\
& \qquad \le \left\vert \left\{ x\in \mathbb{T}\backslash \widetilde{R}%
_{\lambda }:\sup\limits_{p>1}\frac{G_{p}^{\prime }f(x) }{p\ln\ln (p+2)}%
>\lambda \right\} \right\vert +|\tilde R_\lambda| \\
&\qquad \lesssim \left( \frac{\left\Vert f\right\Vert_1 }{\lambda }\right)
^{1/2}+ \frac{\left\Vert f\right\Vert_1 }{\sqrt \lambda }.
\end{align*}
Again using homogeneity, we obtain \e {est}.
\end{proof}

We will need the following estimations.

\begin{lemma}[Gabisonia, \protect\cite{Gab}]
\label{gabest} If $p>1$ and $f\in L^{1}(\mathbb{T}^{2})$, then 
\begin{equation}\label{a4}
\left( \frac{1}{n}\sum\limits_{j=0}^{n-1}\left\vert S_{j}(x,f)\right\vert
^{p}\right) ^{1/p}\lesssim G_{p}^{\left( n\right) }f(x).
\end{equation}
\end{lemma}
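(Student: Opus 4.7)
The plan is to prove the lemma by an $\ell^p$--$\ell^q$ duality argument. By duality,
\[
\Bigl(\sum_{j=0}^{n-1} |S_j(x,f)|^p\Bigr)^{1/p}
=\sup_{\|a\|_{\ell^q}\le 1} \sum_{j=0}^{n-1} a_j S_j(x,f),
\]
so it suffices to estimate $\sum_j a_j S_j(x,f)$ by $n^{1/p}\,G_p^{(n)} f(x)$ uniformly over sequences $a=(a_j)$ with $\|a\|_{\ell^q}\le 1$. Using the Dirichlet-kernel representation of $S_j$,
\[
\sum_{j=0}^{n-1} a_j S_j(x,f) = \frac{1}{\pi}\int_0^\pi [f(x+t)+f(x-t)]\,K_a(t)\,dt,
\qquad K_a(t):=\sum_{j=0}^{n-1} a_j D_j(t),
\]
so the task reduces to bounding this integral against the trigonometric-polynomial kernel $K_a$.

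Next I would split the integration over the intervals $I_k:=[(k-1)/n,k/n]$, $k=1,\dots,[n\pi]$, on which $1/(2\sin(t/2))\sim n/k$. A local H\"older estimate gives
\[
\Bigl|\int_{I_k} [f(x+t)+f(x-t)]\,K_a(t)\,dt\Bigr|
\le \frac{k}{n}\,v_k\,\|K_a\|_{L^\infty(I_k)},
\]
where $v_k := (n/k)\int_{I_k}(|f(x+t)|+|f(x-t)|)\,dt$ is precisely the weight whose $\ell^q$-sum gives $G_p^{(n)}f(x)$. Summing over $k$ and applying H\"older's inequality with exponents $q$ and $p$ yields
\[
\Bigl|\sum_{j=0}^{n-1} a_j S_j(x,f)\Bigr|
\lesssim G_p^{(n)} f(x)\cdot
\Bigl(\sum_{k=1}^{[n\pi]} (k/n)^p\, \|K_a\|_{L^\infty(I_k)}^p\Bigr)^{1/p},
\]
so the whole problem is reduced to the ``kernel estimate'' $\bigl(\sum_k (k/n)^p \|K_a\|_{L^\infty(I_k)}^p\bigr)^{1/p}\lesssim n^{1/p}\|a\|_{\ell^q}$.

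The main technical obstacle is precisely this last estimate. The straightforward pointwise bound $|K_a(t)|\le \|a\|_{\ell^q}\bigl(\sum_j|D_j(t)|^p\bigr)^{1/p}\lesssim \|a\|_{\ell^q}\,n^{1/p}(n/k)$ on $I_k$ is off by an extra factor of $n^{1/p}$, because it employs the full $\|a\|_{\ell^q}$ on each of the $\sim n$ intervals $I_k$ separately. The missing saving must reflect the fact that $K_a$ is a single trigonometric polynomial of degree $n-1$, whose $L^\infty$ norm cannot saturate on all of the $I_k$ simultaneously. I would try to capture this either through a local Nikolskii comparison $\|K_a\|_{L^\infty(I_k)}\lesssim n^{1/q}\|K_a\|_{L^q(I_k)}$ combined with a global $L^q$-estimate of $K_a$ (via Hausdorff--Young on the coefficient level $A_k=\sum_{j\ge|k|} a_j$), or via an Abel summation that re-expresses $K_a$ as a linear combination of Fej\'er-type non-negative kernels, whose $L^\infty$ behaviour on $I_k$ is controllable by the partial sums of $a$ and a Hardy inequality. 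Carrying out this oscillation-aware estimate on $K_a$ is the heart of the matter, and is the step where the specific appearance of the conjugate exponent $q$ in the definition of the Gabisonia operator enters in an essential way.
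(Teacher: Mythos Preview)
The paper does not give its own proof of this lemma; it is quoted from Gabisonia \cite{Gab} and used as a black box. So there is no ``paper's proof'' to compare against, and the question is simply whether your argument is correct.

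Your reduction is clean up to the last step, but the ``kernel estimate'' you isolate,
\[
\Bigl(\sum_{k=1}^{[n\pi]}(k/n)^p\,\|K_a\|_{L^\infty(I_k)}^p\Bigr)^{1/p}\lesssim n^{1/p}\qquad(\|a\|_{\ell^q}\le 1),
\]
is in fact \emph{false} for $1<p<2$, so the approach cannot be completed in that range. To see this, note that on $I_k$ (for $k\ge 2$) one has $(k/n)\,K_a(t)\sim T_a(t):=\sum_{j=0}^{n-1}a_j\sin\bigl((j+\tfrac12)t\bigr)$, so the estimate is equivalent to $\sum_k\|T_a\|_{L^\infty(I_k)}^p\lesssim n$. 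Take $a_j=\varepsilon_j\,n^{-1/q}$ with independent random signs $\varepsilon_j$; then $\|a\|_{\ell^q}=1$, while by Khintchine's inequality $\mathbb{E}|T_a(t)|^p\sim n^{p(1/2-1/q)}$ for each fixed $t$. Hence
\[
\mathbb{E}\sum_{k}\|T_a\|_{L^\infty(I_k)}^p\ \ge\ \mathbb{E}\sum_{k}|T_a(c_k)|^p\ \sim\ n\cdot n^{p(1/2-1/q)},
\]
which for $p<2$ (so $1/2-1/q>0$) blows up like a positive power of $n$. Thus for typical sign choices the kernel estimate fails. The loss occurs at your H\"older step on the $k$-sum: decoupling $v_k$ from $(k/n)\|K_a\|_{L^\infty(I_k)}$ and then taking the worst $a$ uniformly in $k$ is too crude when $p<2$.

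For $p\ge 2$ your outline can be made to work: a local Nikolskii inequality gives $\sum_k\|T_a\|_{L^\infty(I_k)}^p\lesssim n\,\|T_a\|_{L^p(\mathbb{T})}^p$, and Hausdorff--Young (valid precisely because $q\le 2$) yields $\|T_a\|_{L^p}\lesssim\|a\|_{\ell^q}$. But this does not cover the full range $p>1$ stated in the lemma, and moreover the constants produced this way depend on $p$, whereas the paper's convention is that $\lesssim$ hides only absolute constants. So as it stands the proposal has a genuine gap; to obtain the lemma in the full range one must organise the interplay between the $j$-average and the $k$-decomposition differently, rather than passing through an $L^\infty(I_k)$ bound on $K_a$ followed by H\"older.
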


\begin{lemma}[Schipp, \protect\cite{Sch}]
\label{sch1}If $f\in L^1(\mathbb{T} ^2)$, then 
\begin{equation}\label{a5}
\left( \frac{1}{n}\sum\limits_{j=0}^{n-1}\left\vert S_{j}(x,f) \right\vert
^{p}\right) ^{1/p} \lesssim pG_2f(x) .
\end{equation}
\end{lemma}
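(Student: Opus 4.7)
The strategy is first to establish the pointwise strong-$L^2$ bound
\begin{equation*}
\frac{1}{n}\sum_{j=0}^{n-1}|S_j(x,f)|^2 \lesssim (G_2 f(x))^2,
\end{equation*}
and then to extend it to all $p>1$: H\"older's inequality handles $1<p\le 2$ (the factor $p$ is automatic since $p>1$), while $p>2$ requires a higher-moment/distributional argument from which the $p$ genuinely emerges.

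For the $L^2$ bound, I expand via the Dirichlet representation $S_j(x,f)=\int f(x+t)D_j(t)\,dt$ with $D_j(t)=\sin((j+\tfrac12)t)/(2\sin(t/2))$ and sum in $j$:
\begin{equation*}
\frac{1}{n}\sum_{j=0}^{n-1}|S_j(x,f)|^2 = \iint f(x+s)\overline{f(x+t)}\,K_n(s,t)\,ds\,dt,
\end{equation*}
where $K_n(s,t)=\tfrac{1}{n}\sum_{j=0}^{n-1}D_j(s)D_j(t)$. A product-to-sum computation (using $\sin\alpha\sin\beta=\tfrac12[\cos(\alpha-\beta)-\cos(\alpha+\beta)]$ and the closed form $\sum_{j=0}^{n-1}\cos((j+\tfrac12)u)=\sin(nu)/(2\sin(u/2))$) recasts $K_n$ as a Fej\'er-type kernel concentrated on the diagonal $s=t$ at scale $1/n$, with off-diagonal decay controlled by $(|s-t|+1/n)^{-1}(|s+t|+1/n)^{-1}$. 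The diagonal contribution is bounded by $(Mf(x))^2\lesssim (G_2 f(x))^2$ (since the Hardy--Littlewood maximal function is majorized by the $k=1$ term of $G_2^{(n)}$ at a suitable $n$), and the off-diagonal piece, organized in dyadic blocks in $|t|$ at scales $2^m/n$, sums to precisely the Gabisonia quantity $\sum_k a_k^2=(G_2^{(n)}f(x))^2$ with $a_k=(n/k)\int_{(k-1)/n}^{k/n}(|f(x+t)|+|f(x-t)|)\,dt$.

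The case $1<p\le 2$ follows at once from H\"older on the uniform counting measure on $\{0,\dots,n-1\}$: $(\tfrac1n\sum_j|S_j|^p)^{1/p}\le(\tfrac1n\sum_j|S_j|^2)^{1/2}\lesssim G_2 f(x)\le p\cdot G_2 f(x)$. For $p>2$ the plan is to compute the even moments $\tfrac{1}{n}\sum_j|S_j|^{2m}$ as a $2m$-fold integral against $\tfrac{1}{n}\sum_j D_j(t_1)\cdots D_j(t_{2m})$ and iterate the orthogonality bookkeeping from the $L^2$ step; one obtains the Gaussian-type growth
\begin{equation*}
\frac{1}{n}\sum_{j=0}^{n-1}|S_j(x,f)|^{2m} \lesssim (Cm)^{m}\,(G_2 f(x))^{2m}.
\end{equation*}
Taking $(2m)$-th roots yields the sub-Gaussian estimate $(\tfrac1n\sum_j|S_j|^{2m})^{1/(2m)}\lesssim\sqrt{m}\,G_2 f(x)$, and sandwiching $p$ between consecutive even integers gives $(\tfrac1n\sum_j|S_j|^p)^{1/p}\lesssim\sqrt{p}\,G_2 f(x)\le p\cdot G_2 f(x)$.

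The main obstacle is the pointwise $L^2$ bound: one must carefully track the product-to-sum decomposition of $K_n(s,t)$ and the balance between its diagonal and off-diagonal parts so that the resulting dyadic sum aligns exactly with the Gabisonia $\ell^2$-quantity $\sum_k a_k^2$ (rather than a weaker $Mf$-type majorant, which would lose logarithmic factors in $n$). Once the $L^2$ step is in place, the passage to higher $p$ is a combinatorial elaboration of the same orthogonality calculation, and the factor $\sqrt{p}$, and hence the claimed $\lesssim p$, emerges naturally from the standard Gaussian moment growth $(Cm)^m$.
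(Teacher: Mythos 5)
The paper does not prove this lemma at all --- it is imported verbatim from Schipp \cite{Sch} --- so your proposal has to stand on its own, and it does not. The $L^{2}$ step and the range $1<p\le 2$ are fine (the former is just Lemma \ref{gabest} at $p=2$, since $G_{2}^{(n)}f\le G_{2}f$), but the whole content of the lemma is the case $p>2$, and there your key claim, the sub-Gaussian moment growth $\frac{1}{n}\sum_{j}|S_{j}(x,f)|^{2m}\lesssim (Cm)^{m}(G_{2}f(x))^{2m}$, is false. Take $x=0$ and $f_{N}=\operatorname{sgn}D_{N}$, where $D_{N}(t)=\sin((N+\tfrac12)t)/(2\sin(t/2))$. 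Since $|f_{N}|\equiv 1$, every Gabisonia block equals $2/k$ and $G_{2}f_{N}(0)\le\bigl(4\sum_{k}k^{-2}\bigr)^{1/2}\le 3$ uniformly in $N$; on the other hand $S_{N}(0,f_{N})=\frac{1}{\pi}\int|D_{N}|=L_{N}\sim\frac{4}{\pi^{2}}\ln N$, and the elementary bound $\frac{1}{\pi}\int|D_{k}-D_{N}|\lesssim 1+\ln(1+|k-N|)$ shows $|S_{k}(0,f_{N})|\ge c\ln N$ for all $\sim N^{1/10}$ indices $k$ with $|k-N|\le N^{1/10}$. Hence $\frac{1}{2N}\sum_{k<2N}|S_{k}(0,f_{N})|^{2m}\ge \tfrac12 N^{-9/10}(c\ln N)^{2m}$, and choosing $\ln N\sim m$ gives a lower bound of order $(c'm)^{2m}$, not $(Cm)^{m}$. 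So the moments grow like $(Cm)^{2m}$, the level sets of $k\mapsto|S_{k}(x,f)|$ have exponential (John--Nirenberg/BMO-type) rather than Gaussian tails, and the factor $p$ in \eqref{a5} is sharp. This is also visible from the architecture of the paper: the exponent $1/2$ in \eqref{t4} and in the conjectured optimal bound \eqref{t6} is traced exactly to the $p^{2}=p\cdot p$ in Lemma \ref{main}, one factor of which is this lemma's $p$; a $\sqrt{p}$ bound would improve the main theorem past what the authors conjecture to be best possible.

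Even if you replace the target by the correct $(Cm)^{2m}$, the sketch does not amount to a proof. ``Iterating the orthogonality bookkeeping'' on the $2m$-fold kernel $\frac{1}{n}\sum_{j}D_{j}(t_{1})\cdots D_{j}(t_{2m})$ is precisely where the difficulty sits: that kernel concentrates near the $2^{2m}$ hyperplanes $\pm t_{1}\pm\cdots\pm t_{2m}=0$, and the cross terms do not pair off as in the $m=1$ case. Note that Gabisonia's Lemma \ref{gabest} already gives $\bigl(\frac1n\sum_{j}|S_{j}|^{p}\bigr)^{1/p}\lesssim G_{p}^{(n)}f$ for every $p$, but with the $\ell^{q}$ norm, $q=p/(p-1)\to 1$; the entire point of Schipp's lemma is to trade that stronger $\ell^{q}$ quantity for the fixed $\ell^{2}$ quantity $G_{2}f$ at the price of a factor $p$, and your argument never engages with this trade-off. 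The known proofs (Rodin \cite{Ro}, Schipp \cite{Sch}) obtain the linear-in-$p$ growth from a BMO estimate for the sequence $\{S_{k}(x,f)\}_{k}$ together with a John--Nirenberg-type argument, which is a genuinely different mechanism from Gaussian moment combinatorics.
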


Rodin \cite{Ro} proved the weak $(1,1)$-type estimate for the operators $%
G_{p}f(x)$ with a fixed $p>1$. From this fact, applying a standard
argument, one can derive

\begin{lemma}[Rodin, \protect\cite{Ro}]
\label{sch2}Let $f\in L\log L\left( \mathbb{T}\right) $. Then%
\begin{equation*}
\left\Vert G_2\left( f\right) \right\Vert _{1}\lesssim 1+\int\limits_{%
\mathbb{T}}\left\vert f\right\vert \log \left\vert f\right\vert .
\end{equation*}
\end{lemma}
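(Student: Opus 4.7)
The plan is to derive the $L\log L \to L^1$ bound for $G_2$ from the weak $(1,1)$ estimate of Rodin via the classical Stein-type truncation argument, using also the fact that $G_2$ is trivially bounded on $L^\infty$.

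First I would establish the $L^\infty$ endpoint: for every $n$,
\begin{equation*}
G_2^{(n)} f(x) \le \|f\|_\infty\Bigl(\sum_{k=1}^\infty \tfrac{1}{k^2}\Bigr)^{1/2} \lesssim \|f\|_\infty,
\end{equation*}
so that $\|G_2 f\|_\infty \lesssim \|f\|_\infty$. Combined with the weak $(1,1)$ estimate from Rodin, namely $|\{G_2 f>\lambda\}|\lesssim \|f\|_1/\lambda$, these are the two endpoint facts we need.

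Next I would write $\|G_2 f\|_1$ via the layer-cake formula and split at $\lambda=1$:
\begin{equation*}
\|G_2 f\|_1 = \int_0^1 |\{G_2 f>\lambda\}|\,d\lambda + \int_1^\infty |\{G_2 f>\lambda\}|\,d\lambda \le 2\pi + \int_1^\infty |\{G_2 f>\lambda\}|\,d\lambda.
\end{equation*}
For the remaining integral, for each $\lambda\ge 1$ I would truncate $f = f_\lambda^\sharp + f_\lambda^\flat$ with $f_\lambda^\sharp := f\cdot\mathbf{1}_{\{|f|\le\lambda\}}$. By sublinearity and the $L^\infty$ bound, $G_2 f_\lambda^\sharp \le A\lambda$ pointwise for some absolute $A$, hence
\begin{equation*}
\{G_2 f > 2A\lambda\}\subset \{G_2 f_\lambda^\flat > A\lambda\},
\end{equation*}
so the weak $(1,1)$ estimate applied to $f_\lambda^\flat$ yields
\begin{equation*}
|\{G_2 f > 2A\lambda\}|\lesssim \frac{1}{\lambda}\int_{\{|f|>\lambda\}}|f(x)|\,dx.
\end{equation*}

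Finally I would apply Fubini after a change of variable $\mu = 2A\lambda$:
\begin{equation*}
\int_1^\infty |\{G_2 f>\mu\}|\,d\mu \lesssim \int_{\mathbb{T}}|f(x)|\int_{1}^{\max(|f(x)|,1)}\frac{d\lambda}{\lambda}\,dx = \int_{\mathbb{T}}|f(x)|\log^+|f(x)|\,dx,
\end{equation*}
which combined with the $2\pi$ from the small-$\lambda$ part gives the desired conclusion. The argument is entirely routine; the only thing to be careful about is lining up the constants in the good/bad splitting so that the $L^\infty$ and weak-type estimates actually cover the sublevel sets $\{G_2 f>\lambda\}$ for all large $\lambda$, which is handled by the factor $2A$ above.
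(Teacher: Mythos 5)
Your argument is correct and is precisely the ``standard argument'' the paper invokes without writing out: the paper simply cites Rodin's weak $(1,1)$ estimate for $G_p$ with fixed $p$ and leaves the passage to the $L\log L\to L^1$ bound to the reader, and your truncation/layer-cake derivation (weak $(1,1)$ plus the trivial $L^\infty$ bound $G_2 f\le 2\|f\|_\infty(\sum_k k^{-2})^{1/2}$) is the intended route. The only detail worth tidying is that after the substitution $\mu=2A\lambda$ the inner $\lambda$-integral starts at $1/(2A)$ rather than $1$, contributing an extra $O(\|f\|_1)\lesssim 1+\int_{\mathbb{T}}|f|\log^+|f|$, which is absorbed by the additive constant in the statement.
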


For any function $f\in L^{1}(\mathbb{T}^{2})$ define 
\begin{eqnarray*}
G_{p,1}(x_{1},x_{2};f) &=&G_{p}f_{x_{2}}(x_{1}),\quad
G_{p,2}(x_{1},x_{2};f)=G_{p}f_{x_{1}}(x_{2}), \\
G_{p,1}^{\left( n\right) }(x_{1},x_{2};f) &=&G_{p}^{\left( n\right)
}f_{x_{2}}(x_{1}),\quad G_{p,2}^{\left( n\right)
}(x_{1},x_{2};f)=G_{p}^{\left( n\right) }f_{x_{1}}(x_{2}),
\end{eqnarray*}%
where $f_{x_{2}}(\cdot )=f(\cdot ,x_{2})$ and $f_{x_{1}}(\cdot
)=f(x_{1},\cdot )$ are considered as functions on $x_{1}$ and $x_{2}$
respectively. Similarly one dimensional partial sums of $f(x_{1},x_{2})$
with respect to each variables will be denoted by 
\begin{equation*}
S_{n,1}(x_{1},x_{2},f)=S_{n}(x_{1},f_{x_{2}}),\quad
S_{n,2}(x_{1},x_{2},f)=S_{n}(x_{2},f_{x_{1}}).
\end{equation*}

\begin{lemma}
\label{main}If $f\in L\log L(\mathbb{T}^{2})$, then 
\begin{align*}
&\left\vert \left\{ \sup\limits_{p>1}\sup\limits_{n,m\in \mathbb{N}}%
\frac{\left( \frac{1}{nm}\sum\limits_{i=0}^{n-1}\sum\limits_{j=0}^{m-1}\left%
\vert S_{i,j}\left( x_{1},x_{2},f\right) \right\vert ^{p}\right) ^{1/p}}{%
p^{2}\ln\ln (p+2)}>\lambda \right\} \right\vert \\
&\qquad\qquad\lesssim \left( \frac{1}{\lambda }\left( 1+\iint\limits_{%
\mathbb{T}^{2}}\left\vert f\right\vert \log ^{+}\left\vert f\right\vert
\right) \right) ^{1/2},\quad \lambda>0.
\end{align*}
\end{lemma}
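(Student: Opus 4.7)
The plan is a two-variable iteration of the one-dimensional strong-summability machinery: Gabisonia's operator $G_p^{(n)}$ in the $x_1$-direction (\lem{gabest}) and Schipp's $G_2$ in the $x_2$-direction (\lem{sch1}). The resulting two-dimensional weak estimate will then be reduced to the one-dimensional \lem{mainlemma} applied to the auxiliary slice function $F(x_1,x_2):=G_{2,2}(x_1,x_2;f)$, whose $L^1(\mathbb T^2)$-norm is controlled by \lem{sch2}.

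Concretely, for fixed $j$ and $x_2$, \lem{gabest} applied to $x_1\mapsto S_{j,2}(x_1,x_2,f)$ yields
\[
\left(\tfrac{1}{n}\sum_{i=0}^{n-1}|S_{ij}(x_1,x_2,f)|^p\right)^{1/p}\lesssim G_{p,1}^{(n)}(x_1,x_2;S_{j,2}f).
\]
Raising to the $p$-th power, averaging over $j$, expanding $G_{p,1}^{(n)}$ via its $\ell^q$-definition, and interchanging the $j$-sum with the inner $\ell^q$-sum by Minkowski's inequality in $\ell^{p/q}$ (valid for $p\geq 2$; the range $1<p<2$ reduces to the case $p=2$ by monotonicity of the $L^p$-mean in $p$) reduces matters to bounding $\sum_j|S_{j,2}(x_1\pm t,x_2,f)|^p$. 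Applying \lem{sch1} in the $x_2$-variable to the slice $f(x_1\pm t,\cdot)$ gives $\lesssim mp^p(G_{2,2}(x_1\pm t,x_2;f))^p$, and Jensen's inequality on the $I_k=[(k-1)/n,k/n]$-integrals reassembles the pieces into the pointwise bound
\[
\left(\tfrac{1}{nm}\sum_{i,j}|S_{ij}|^p\right)^{1/p}\lesssim p\,G_{p,1}^{(n)}(x_1,x_2;F).
\]

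Dividing by $p^2\ln\ln(p+2)$ leaves the quotient $G_{p,1}^{(n)}(x_1,x_2;F)/(p\ln\ln(p+2))$. For each fixed $x_2$, \lem{mainlemma} applied to $F(\cdot,x_2)$ bounds its $x_1$-distribution by $(\|F(\cdot,x_2)\|_{L^1(\mathbb T)}/\lambda)^{1/2}$. Integrating in $x_2$ and pulling the $1/2$-power outside by Cauchy--Schwarz yields the global bound $\lesssim(\|F\|_{L^1(\mathbb T^2)}/\lambda)^{1/2}$, and \lem{sch2} applied in the $x_2$-variable then integrated in $x_1$ gives $\|F\|_{L^1(\mathbb T^2)}\lesssim 1+\iint|f|\log^+|f|$, which completes the argument.

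The main obstacle is the Minkowski--Jensen--Schipp interchange in the second paragraph: matching the $\ell^p$/$\ell^q$-exponent juggling so that the final right-hand side is literally $p\,G_{p,1}^{(n)}(\cdot;F)$---rather than a slightly weaker expression like $p\bigl(\sum_k(\int_{I_k}F^p)^{q/p}/k^q\bigr)^{1/q}$, which is not obviously comparable to $G_p^{(n)}F$---is delicate. Overcoming this may require either (i) an auxiliary Hardy--Littlewood maximal estimate on $F^p$ together with a bootstrapped integrability gain, or (ii) a Calder\'on--Zygmund-type dyadic decomposition of $I_k$ in the spirit of the proof of \lem{mainlemma} itself.
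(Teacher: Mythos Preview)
Your approach is correct and is essentially identical to the paper's proof: Gabisonia in $x_1$, generalized Minkowski to pull the $j$-average inside, Schipp in $x_2$, then \lem{mainlemma} applied slicewise in $x_1$ followed by \lem{sch2} in $x_2$.

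The ``main obstacle'' you flag at the end, however, is not an obstacle, and neither of the proposed workarounds (i), (ii) is needed. After Gabisonia you have $\tfrac{1}{m}\sum_j\bigl(G_{p,1}^{(n)}(|S_{j,2}f|)\bigr)^p$. Writing $G_{p,1}^{(n)}(g)=\|T_kg\|_{\ell^q_k}$ with $T_kg=\tfrac{n}{k}\int_{I_k}(|g(x_1{+}t)|+|g(x_1{-}t)|)\,dt$, the mixed-norm Minkowski inequality (valid for $q\le p$, i.e.\ $p\ge 2$, and you already handled $1<p<2$ by monotonicity) gives $\bigl\|\,\|T_kg_j\|_{\ell^q_k}\,\bigr\|_{\ell^p_j}\le \bigl\|\,\|T_kg_j\|_{\ell^p_j}\,\bigr\|_{\ell^q_k}$, and then the \emph{integral} Minkowski inequality applied to each $T_k$ gives $\|T_kg_j\|_{\ell^p_j}\le T_k\bigl(\|g_j\|_{\ell^p_j}\bigr)$. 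Together these yield exactly
\[
\Bigl(\tfrac{1}{m}\sum_j\bigl(G_{p,1}^{(n)}(|S_{j,2}f|)\bigr)^p\Bigr)^{1/p}
\;\le\;
G_{p,1}^{(n)}\Bigl(\bigl(\tfrac{1}{m}\sum_j|S_{j,2}f|^p\bigr)^{1/p}\Bigr),
\]
which is the one-line ``generalized Minkowski'' step in the paper. Now \lem{sch1} is applied \emph{pointwise} at each shifted argument $x_1\pm t$ to the function $\bigl(\tfrac{1}{m}\sum_j|S_{j,2}f(x_1{\pm} t,x_2)|^p\bigr)^{1/p}\lesssim p\,F(x_1{\pm} t,x_2)$, and since $T_k$ is a plain integral this feeds straight back into the definition of $G_{p,1}^{(n)}$ with $F$, not $F^p$, under $\int_{I_k}$. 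The outcome is literally $\lesssim p\,G_{p,1}^{(n)}(\,\cdot\,;F)$. The order is Minkowski first (pushing the $\ell^p_j$-norm all the way inside the integral), Schipp second---not the reverse, which is what produced the spurious $\bigl(\int_{I_k}F^p\bigr)^{1/p}$ in your sketch.
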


\begin{proof}
Using \e {a4}, \e {a5} and generalized Minkowsi's inequality, we get 
\begin{align*}
\frac{1}{nm}\sum\limits_{i=0}^{n-1}\sum\limits_{j=0}^{m-1}\left\vert
S_{i,j}\left( x_{1},x_{2},f\right) \right\vert ^{p}& =\frac{1}{nm}%
\sum\limits_{i=0}^{n-1}\sum\limits_{j=0}^{m-1}\left\vert S_{i,1}\left(
x_{1},x_{2},S_{j,2}\left( f\right) \right) \right\vert ^{p} \\
& \leq \frac{1}{m}\sum\limits_{j=0}^{m-1}\left( G_{p,1}^{\left( n\right)
}\left( x_{1},x_{2},\left\vert S_{j,2}\left( f\right) \right\vert \right)
\right) ^{p} \\
& \leq \left( G_{p,1}^{\left( n\right) }\left( x_{1},x_{2},\left( \frac{1}{m}%
\sum\limits_{j=0}^{m-1}\left\vert S_{j,2}\left( f\right) \right\vert
^{p}\right) ^{1/p}\right) \right) ^{p} \\
& \leq \left( G_{p,1}\left( x_{1},x_{2},\left( \frac{1}{m}%
\sum\limits_{j=0}^{m-1}\left\vert S_{j,2}\left( f\right) \right\vert
^{p}\right) ^{1/p}\right) \right) ^{p} \\
& \lesssim p^{p}\left( G_{p,1}\left( x_{1},x_{2},G_{2,2}\left( f\right)
\right) \right) ^{p}.
\end{align*}%
Hence we obtain 
\begin{align*}
\Omega & =\left\{ \left( x_{1},x_{2}\right) \in \mathbb{T}%
^{2}:\sup\limits_{p>1}\sup\limits_{n,m\in \mathbb{N}}\frac{\left( \frac{1%
}{nm}\sum\limits_{i=0}^{n-1}\sum\limits_{j=0}^{m-1}\left\vert S_{i,j}\left(
x_{1},x_{2},f\right) \right\vert ^{p}\right) ^{1/p}}{p^{2}\ln \ln (p+2)}%
>\lambda \right\}  \\
& \subset \left\{ \left( x_{1},x_{2}\right) \in \mathbb{T}%
^{2}:\sup\limits_{p>1}\frac{G_{p,1}\left( x_{1},x_{2},G_{2,2}\left(
f\right) \right) }{p\ln \ln (p+2)}>\lambda \right\},
\end{align*}%
then, applying Lemma \ref{mainlemma} and \ref{sch2}, we conclude 
\begin{align*}
\left\vert \Omega \right\vert & =\int\limits_{\mathbb{T}^{2}}\mathbb{I}%
_{\Omega }\left( x_{1},x_{2}\right) dx_{1}dx_{2}=\int\limits_{\mathbb{T}%
}dx_{2}\int\limits_{\mathbb{T}}\mathbb{I}_{\Omega }\left( x_{1},x_{2}\right)
dx_{1} \\
& \lesssim \int\limits_{\mathbb{T}}\left( \frac{1}{\lambda }\int\limits_{%
\mathbb{T}}G_{2,2}\left( x_{1},x_{2},f\right) dx_{1}\right) ^{1/2}dx_{2} \\
& \lesssim \int\limits_{\mathbb{T}}\left[ \frac{1}{\lambda }\left(
1+\int\limits_{\mathbb{T}}\left\vert f\left( x_{1},x_{2}\right) \right\vert
\log ^{+}\left\vert f\left( x_{1},x_{2}\right) \right\vert dx_{1}\right) %
\right] ^{1/2}dx_{2} \\
& \lesssim \left[ \frac{1}{\lambda }\left( 1+\int\limits_{\mathbb{T}%
^{2}}\left\vert f\left( x_{1},x_{2}\right) \right\vert \log ^{+}\left\vert
f\left( x_{1},x_{2}\right) \right\vert dx_{1}dx_{2}\right) \right] ^{1/2}.
\end{align*}
Lemma is proved.
\end{proof}

\section{\protect\bigskip Proof of Theorem \protect\ref{exp}}

Let $L_{M}:=L_{M}\left(\mathbb{T} ^2\right)$ be Orlicz space of
functions on $\mathbb{T} ^2$ generated by the Young function $M(t)=t\log ^{+}t$.
It is known that $L_{M}$ is a Banach space with respect to the Luxemburg
norm 
\begin{equation*}
\left\Vert f\right\Vert _{M}:=\inf \left\{ \lambda :\lambda
>0,\int\limits_{X}M\left( \frac{\left\vert f\right\vert }{\lambda }\right)
\leq 1\right\} <\infty .
\end{equation*}%
According to a theorem from (\cite{KrRu}, Chap. 2, theorem 9.5) we have%
\begin{equation*}
0,5\left( 1+\int\limits_{\mathbb{T}^{2}}M\left( \left\vert f\right\vert
\right) \right) \leq \left\Vert f\right\Vert _{M}\leq 1+\int\limits_{\mathbb{%
\ T}^{2}}M\left( \left\vert f\right\vert \right)
\end{equation*}%
provided $\left\Vert f\right\Vert _{M}=1$. Hence from Lemma \ref{main} we
conclude 
\begin{equation}  \label{exp-est}
\left\vert \left\{ \sup\limits_{p>1}\sup\limits_{n,m\in \mathbb{N}}\frac{
\left( \frac{1}{nm}\sum\limits_{i=0}^{n-1}\sum\limits_{j=0}^{m-1}\left\vert
S_{i,j}\left(f\right) \right\vert ^{p}\right) ^{1/p}}{ p^2\log\log (p+2)}%
>\lambda \right\} \right\vert \lesssim \left( \frac{ \left\Vert f\right\Vert
_M}{\lambda }\right) ^{1/2}.
\end{equation}%
Indeed, at first we deduce the case of $\left\Vert f\right\Vert _M=1$, then
using a homogeneity argument, we get the inequality in the general case.

\begin{proof}[Proof of Theorem]
First we shall prove that for any $f\in L\log L(\mathbb{T}^{2})$ the relation 
\begin{equation}
\lim_{n,m\rightarrow \infty }\sup_{p>1}\frac{\left( \frac{1}{nm}%
\sum\limits_{i=0}^{n-1}\sum\limits_{j=0}^{m-1}|S_{i,j}(f)-f|^{p}\right)
^{1/p}}{p^{2}\ln \ln (p+2)}=0\text{ a.e. }.  \label{t2}
\end{equation}%
Observe that \e {t2} trivially holds for the double trigonometric
polynomials. Indeed, let $T$ be a trigonometric polynomial of degree $%
(s_{1},s_{2})$. Then we have 
\begin{align*}
& S_{i,j}\left( T\right) -T=0,\quad i\geq s_{1},j\geq s_{2}, \\
& S_{i,j}\left( T\right) -T=S_{s_{1},j}\left( T\right) -T,\quad i\geq
s_{1},\,0\leq j<s_{2}, \\
& S_{i,j}\left( T\right) -T=S_{i,s_{2}}\left( T\right) -T,\quad 0\leq
i<s_{1},\,j\geq s_{2}.
\end{align*}%
Thus for integers $n> s_{1}$ and $m> s_{2}$ we have 
\begin{align*}
\frac{1}{nm}\sum_{i=0}^{n-1}& \sum_{j=0}^{m-1}|S_{i,j}(T)-T|^{p} \\
& =\frac{1}{n}\sum_{i=0}^{s_{1}-1}\frac{1}{m}%
\sum_{j=s_{2}}^{m-1}|S_{i,s_{2}}(T)-T|^{p}+\frac{1}{m}\sum_{j=0}^{s_{2}-1}%
\frac{1}{n}\sum_{i=s_{1}}^{n-1}|S_{s_{1},j}(T)-T|^{p} \\
& \qquad +\frac{1}{nm}\sum_{i=0}^{s_{1}-1}%
\sum_{j=0}^{s_{2}-1}|S_{i,j}(T)-T|^{p} \\
& \leq \frac{1}{n}\sum_{i=0}^{s_{1}-1}|S_{i,s_{2}}(T)-T|^{p}+\frac{1}{m}%
\sum_{j=0}^{s_{2}-1}|S_{s_{1},j}(T)-T|^{p} \\
& \qquad +\frac{1}{nm}\sum_{j=0}^{s_{2}-1}%
\sum_{i=0}^{s_{2}-1}|S_{i,j}(T)-T|^{p} \\
& \leq \frac{c_{1}}{n}+\frac{c_{2}}{m},
\end{align*}%
where $c_{1}$ and $c_{2}$ are constants depended on $T$. Thus (\ref{t2})
holds if $f=T$. To prove the general case it is enough to show that the set 
\begin{equation*}
G_{\lambda }=\left\{ \limsup_{n,m\rightarrow \infty }\,\sup_{p>1}\frac{%
\left( \frac{1}{nm}\sum\limits_{i=0}^{n-1}\sum%
\limits_{j=0}^{m-1}|S_{i,j}(f)-f|^{p}\right) ^{1/p}}{p^{2}\ln \ln (p+2)}%
>\lambda \right\} 
\end{equation*}%
has measure zero for any $\lambda >0$. Since $M\left( t\right) $ satisfies
the $\Delta _{2}$-condition, the function $f$ can be approximated by a trigonometric polynomial $T$(see 
\cite{KrRu}), that is
\begin{equation*}
\left\Vert f-T\right\Vert _{M}<\varepsilon,\quad \left\Vert f-T\right\Vert _{L^1}<\varepsilon
\end{equation*}%
Since (\ref{t2}) holds for $T$, applying (\ref{exp-est}), one can obtain 
\begin{align*}
|G_{\lambda }|& =\left\vert \left\{ \limsup_{n,m\rightarrow \infty
}\,\sup_{p>1}\frac{\left( \frac{1}{nm}\sum\limits_{i=0}^{n-1}\sum%
\limits_{j=0}^{m-1}|S_{i,j}(f-T)-(f-T)|^{p}\right) ^{1/p}}{p^{2}\ln \ln (p+2)%
}>\lambda \right\} \right\vert  \\
& \leq \left\vert \left\{ \sup_{n,m\in \mathbb{N}}\,\sup_{p>1}\frac{%
\left( \frac{1}{nm}\sum\limits_{i=0}^{n-1}\sum%
\limits_{j=0}^{m-1}|S_{i,j}(f-T)|^{p}\right) ^{1/p}}{p^{2}\ln \ln (p+2)}%
>\lambda /2\right\} \right\vert  \\
& \qquad +\left\vert \left\{ \sup_{p>1}\frac{|f-T|}{p^{2}\ln \ln (p+2)}%
>\lambda /2\right\} \right\vert  \\
& \lesssim \left( \frac{\left\Vert f-T\right\Vert _{M}}{\lambda }\right)
^{1/2}+\frac{\|f\|_{L^1}}{\lambda} \\
& \leq \left( \frac{\varepsilon }{\lambda }\right) ^{1/2}+\frac{\varepsilon }{\lambda }.
\end{align*}%
Since $\varepsilon >0$ can be taken arbitrarily small, we conclude that $%
\left\vert G_{\lambda }\right\vert =0$ for any $\lambda >0$ and so \e {t2}
holds. To prove \e {t1} observe that 
\begin{align}
u(s)&=\exp \left( \sqrt{\frac{s}{\ln \ln (s+2)}}\right) \label{t5}\\
&\leq v(s)=\sum_{k=1}^{\infty }\left( \frac{d}{k}\sqrt{\frac{s}{\ln \ln (k+2)}}%
\right) ^{k},\,s>1,\nonumber  
\end{align}%
for some absolute constant $d$. Indeed, if $s\geq 1$, then one can check that 
\begin{equation*}
1<\sqrt{\frac{s}{\ln \ln (s+2)}}<k(s)=\left[ \sqrt{\frac{s}{\ln \ln (s+2)}}%
\right] +1<2\sqrt{\frac{s}{\ln \ln (s+2)}},
\end{equation*}%
and therefore for enough bigger $d$ we we will have
\begin{align*}
v(s)& \geq \left( \frac{d}{k(s)}\sqrt{\frac{s}{\ln \ln (k(s)+2)}}\right)
^{k(s)} \\
& >\left( \frac{d}{2}\sqrt{\frac{\ln \ln (s+2)}{\ln \ln (k(s)+2)}}\right)
^{k(s)}>e^{k(s)} \\
& \geq u(s)
\end{align*}%
and so \e {t5}. If the function $\Phi $ satisfies \e {t4}, then one
can check that 
	\begin{equation*}
	\Phi(s)\le \exp \left(\sqrt{\frac{A\cdot s}{\ln\ln (A\cdot s+2)}}\right)=u(As),\quad s>S,
	\end{equation*}
for some positive numbers $A>1,S>1$. Consider the functions 
\begin{align*}
& \varphi _{i,j}(f)=S_{i,j}(f)-f, \\
& \varphi _{i,j}^{\ast }(f)=\left\{ 
\begin{array}{lc}
\varphi _{i,j}(f) & \hbox{ if }|\varphi _{i,j}(f)|\leq S, \\ 
0 & \hbox { if }|\varphi _{i,j}(f)|>S, \\ 
& 
\end{array}%
\right.  \\
& \varphi _{i,j}^{\ast \ast }(f)=\varphi _{i,j}(f)-\varphi _{i,j}^{\ast }(f).
\end{align*}%
From \e {t5} and the definition of $\Phi$ it follows that 
\begin{align*}
\frac{1}{nm}\sum_{i=0}^{n-1}\sum_{j=0}^{m-1}\Phi (|\varphi _{i,j}(f)|)& =%
\frac{1}{nm}\sum_{i=0}^{n-1}\sum_{j=0}^{m-1}\Phi (|\varphi _{i,j}^{\ast
}(f)|)+\frac{1}{nm}\sum_{i=0}^{n-1}\sum_{j=0}^{m-1}\Phi (|\varphi
_{i,j}^{\ast \ast }(f)|) \\
& \leq \frac{1}{nm}\sum_{i=0}^{n-1}\sum_{j=0}^{m-1}\Phi (|\varphi
_{i,j}^{\ast }(f)|)+v\left(A|\varphi
_{i,j}^{\ast \ast }(f)|\right) \\
& = \frac{1}{nm}\sum_{i=0}^{n-1}\sum_{j=0}^{m-1}\Phi (|\varphi
_{i,j}^{\ast }(f)|) \\
& \qquad +\sum_{k=1}^{\infty }\frac{1}{nm}\sum\limits_{i=0}^{n-1}\sum%
\limits_{j=0}^{m-1}\left( \frac{d}{k}\sqrt{\frac{A\cdot |\varphi
_{i,j}^{\ast \ast }(f)|}{\ln \ln (k+2)}}\right) ^{k} \\
& \leq \frac{1}{nm}\sum_{i=0}^{n-1}\sum_{j=0}^{m-1}\Phi (|\varphi
_{i,j}^{\ast }(f)|) \\
& \quad +\sum_{k=1}^{\infty }(d\sqrt{A})^{k}\left( \sup_{p>1/2}\frac{%
\left( \frac{1}{nm}\sum\limits_{i=0}^{n-1}\sum\limits_{j=0}^{m-1}|\varphi
_{i,j}(f)|^{p}\right) ^{1/p}}{4p^{2}\ln \ln (2p+2)}\right) ^{\frac{k}{2}}.
\end{align*}%
The second term of the last expression tends to zero almost everywhere,
since according to \e {t2} we have 
\begin{align*}
\limsup_{n,m\rightarrow \infty }\sup_{p>1/2}& \frac{\left( \frac{1}{nm}%
\sum\limits_{i=0}^{n-1}\sum\limits_{j=0}^{m-1}|\varphi _{i,j}(f)|^{p}\right)
^{1/p}}{4p^{2}\ln \ln (2p+2)} \\
& \leq \lim_{n,m\rightarrow \infty }\sup_{p>1}\frac{\left( \frac{1}{nm}%
\sum\limits_{i=0}^{n-1}\sum\limits_{j=0}^{m-1}|S_{i,j}(f)-f|^{p}\right)
^{1/p}}{p^{2}\ln \ln (p+2)}=0\text{ a.e. }
\end{align*}%
Hence, to prove \e {t1} it is enough to show the same for the first term.
From \e {t2} and Chebyshev's inequality it follows that 
\begin{align*}
r_{n,m}(x_{1},x_{2})& =\frac{\#\{i,j\in \mathbb{N}:\,0\leq i<n,\,0\leq
j<m,\,\varphi _{i,j}(x_{1},x_{2})>\varepsilon \}}{nm} \\
& \leq \frac{1}{\varepsilon }\cdot \frac{1}{nm}\sum\limits_{i=0}^{n-1}\sum%
\limits_{j=0}^{m-1}|\varphi _{i,j}(x_{1},x_{2},f)|\rightarrow 0\text{ a.e.},
\end{align*}%
where $\#C$ denotes the cardinality of a finite set $C$. Thus for a.e. $%
(x_{1},x_{2})\in \mathbb{T}^{2}$ we get 
\begin{align*}
\limsup_{n,m\rightarrow \infty }\frac{1}{nm}&
\sum_{i=0}^{n-1}\sum_{j=0}^{m-1}\Phi (|\varphi _{i,j}^{\ast
}(x_{1},x_{2},f)|) \\
& \leq \limsup_{n,m\rightarrow \infty }(r_{n,m}(x_{1},x_{2})\Phi
(S)+(1-r_{n,m}(x_{1},x_{2}))\Phi (\varepsilon )) \\
& =\Phi (\varepsilon )\text{ a.e.}.
\end{align*}%
Since $\varepsilon >0$ can be taken arbitrary small we get 
\begin{equation*}
\lim_{n,m\rightarrow \infty }\frac{1}{nm}\sum_{i=0}^{n-1}\sum_{j=0}^{m-1}%
\Phi (|\varphi _{i,j}^{\ast }(x_{1},x_{2},f)|)=0\text{ a.e.}
\end{equation*}%
and so \e {t1}.
\end{proof}

\end{document}